
\documentclass[letterpaper, 10 pt, conference]{ieeeconf}  

\IEEEoverridecommandlockouts                              
\overrideIEEEmargins


\usepackage{xcolor}
\usepackage{float}
\usepackage[fleqn]{amsmath}
\let\proof\relax 

\usepackage{amsthm,amssymb,graphicx,url}

\usepackage{enumitem}
\usepackage{booktabs}
\usepackage{subcaption}
 
\usepackage{appendix}

\usepackage{wrapfig}
\usepackage{hyperref} %
\usepackage{soul}  
\usepackage{cleveref}
\usepackage{bm}
\usepackage{multicol}
\usepackage{mathtools}
\usepackage{cite}

\crefname{equation}{}{} 
\crefname{assumption}{Assumption}{}
\crefname{table}{Table}{} 
\crefname{figure}{Fig.}{}
\crefname{section}{Section}{}
\crefname{remark}{Remark}{}
\usepackage{algorithm,algorithmicx} 
\usepackage{algpseudocode} 
\newlength\myindent
\setlength\myindent{2em}

\DeclareMathOperator*{\argmin}{argmin}


\DeclarePairedDelimiter\abs{\lvert}{\rvert}
\newcommand{\norm}[1]{\left\lVert#1\right\rVert}

\newcommand{\infnorm}[1]{\left\lVert#1\right\rVert_\infty}

\def\tilx{\tilde{x}}
\def\dotx{\dot{x}}

\def\mbR{\mathbb{R}}

\def \mbZ{\mathbb{Z}}

\def\mcC{\mathcal{C}}

\def \hatx{\hat{x}}

\def\trieq{\triangleq}

\newtheorem{theorem}{Theorem}
\newtheorem{lemma}{Lemma}
\theoremstyle{definition}  \newtheorem{defn}{Definition}
\theoremstyle{definition} \newtheorem{assumption}{Assumption}
\theoremstyle{remark}  \newtheorem{remark}{Remark}

\def\hd{{\hat d}}
\def\mbZ{\mathbb{Z}}

\def\mcC{{\mathcal{C}}}

\def \mcK{{\mathcal{K}}}
\def \bbracket#1{\bm{[}#1\bm{]}}

\def \proof{\noindent{\it Proof}. }

\def \QP{\textbf{QP}}
\def \RaQP{\textbf{aR-QP}}

\usepackage{commath}



\title{\LARGE \bf
Adaptive Robust Quadratic Programs using Control Lyapunov \\and Barrier Functions}
%

%


\author{Pan Zhao, Yanbing Mao, Chuyuan Tao, Naira Hovakimyan, Xiaofeng Wang 
\thanks{This work is supported by AFOSR, NASA and NSF NRI grant \#1830639. }
\thanks{P. Zhao, Y. Mao, C. Tao and N. Hovakimyan are with the Department of 
Mechanical Science and Engineering, University of
Illinois at Urbana-Champaign, Urbana, IL 61801, USA. {\tt\small panzhao2, ybmao, chuyuan2, nhovakim@illinois.edu} }
\thanks{X. Wang is with the Department of Electrical Engineering, University of South Carolina, Columbia,
SC 29208, USA.
        {\tt\small wangxi@cec.sc.edu}}%
}

\begin{document}

\maketitle
\thispagestyle{empty}
\pagestyle{empty}

\begin{abstract}
This paper presents adaptive robust quadratic program (QP) based control using control Lyapunov and barrier functions for nonlinear systems subject to time-varying and state-dependent uncertainties. 
An adaptive estimation law is proposed to estimate the pointwise value of the uncertainties with pre-computable estimation error bounds. The estimated uncertainty and the error bounds are then used to formulate a robust QP, which ensures that the actual uncertain system will not violate the safety constraints defined by the control barrier function. Additionally, the accuracy of the uncertainty estimation can be systematically improved by reducing the estimation sampling time, leading subsequently to reduced conservatism of the formulated robust QP. The proposed approach is validated in simulations on an adaptive cruise control problem and through comparisons with existing approaches.  
\end{abstract}


\section{INTRODUCTION}

Control Lyapunov functions (CLFs) provide a powerful approach to analyze the closed-loop stability and synthesize stabilizing control signals for nonlinear systems  without resorting to an explicit feedback control law, \cite{sontag1983clf,Artstein1983clf}. They also facilitate optimization based control, e.g. via Quadratic Programs (QPs) \cite{galloway2015torque}, which could explicitly consider input constraints. 
On the other hand, inspired by the barrier functions that are used to certify the forward invariance of a set, control barrier functions (CBFs) are proposed to design feedback control laws to ensure that the system states stay in a safe set  \cite{wieland2007cbf}. 
 Unification of CLF and CBF conditions into a single QP was studied in \cite{ames2016cbf-tac}, which allows compromising the CLF-defined control objectives to enforce safety.  

Due to reliance on dynamic models, the performance of CLF and/or CBF based control  is deteriorted in the presence of model uncertainties and disturbances. As an example, the safety constraints defined using CBF and a nominal model may be violated in the presence of uncertainties and disturbances. The paper \cite{xu2015robustness} studied the robustness of CBF based control in the presence of bounded disturbance and established formal bounds on violation of the CBF constraints. Input-to-state safety in the presence of input disturbance was studied in \cite{kolathaya2018inputSafety} to ensure system states stay in a set that is close to the original safe set. However, in practice, safety constraints may often need to be strictly enforced. Towards this end, adaptive CBF approaches were proposed in \cite{taylor2019adaptiveSafety,lopez2020ra-cbf} for systems with parametric uncertainties. Robust CLF and CBF based control was explored in \cite{nguyen2016optimalACC}, where the state-dependent uncertainties  were assumed to have known uncertainty bounds that were used to formulate robust constraints. This approach can be rather conservative, as the uncertainty bounds need to hold in the entire set of admissible states, which can be overly large. Bayesian learning based approaches using Gaussian process regression (GPR) were also proposed to learn state-dependent uncertainties and subsequently use the learned model to enforce probabilistic safety constraints \cite{wang2018safe,khojasteh2020probabilistic,takano2020robustcbf}. Nevertheless, the expensive computation associated with GPR prohibits the use of these approaches for real-time control; additionally, the predicted uncertainty for a state far away from the collected data points can be quite poor, leading to overly conservative performance or even infeasible QP problems.  


 We present an adaptive estimation based approach to design of CLF and CBF based controllers via QPs in the presence of time-varying and state-dependent nonlinear uncertainties, while strictly enforcing the safety constraints. With the proposed estimation scheme, the pointwise value of the uncertainties can be estimated with pre-computable error bounds. The estimated uncertainty and the error bounds are then used to formulate a QP with robust constraints. We also show that by reducing the estimation sampling time,  the estimated uncertainty can be arbitrarily accurate after a {\it single} sampling interval, which implies that the conservatism of the robust constraints can be arbitrarily small. The effectiveness of the approach is demonstrated on an automotive cruise control (ACC) problem in simulations.

Notations: The symbol $\mathbb{R}^n$ denotes the $n$-dimensional real vector space.  The notations 
 $\mbZ_i$ and $\mbZ_1^n$ denote the integer sets $\{i, i+1, i+2, \cdots\}$ and $\{1, 2,\cdots,n\}$, respectively. 
The notations $\norm{\cdot}$  and $\infnorm{\cdot}$ denote the $2$-norm  and $\infty$-norm of a vector (or a matrix), respectively. 


\section{Preliminaries}\label{sec:preliminaries}
Consider a nonlinear control-affine system 
\begin{equation}\label{eq:dynamics}
    \dot{x} = f(x)+g(x)u + d(t,x),\quad x(0)=x_0,
\end{equation}
where $x\in X \subset \mbR^n$, $u \in U \subset \mbR^m$, $f:\mbR^n\rightarrow \mbR^n$ and $g:\mbR^n\rightarrow \mbR^m$ are known and locally Lipschitz continuous functions, $d(t,x)$ represents the   time-varying and state-dependent uncertainties that may include parameteric uncertainties and external disturbances.
Suppose $X$ is a compact set, and  the control constraint set $U$ is defined as $U\trieq \{u\in \mbR^m: \underline{u}\leq u \leq \bar{u}\}$, where $\underline{u},\bar{u}\in\mbR^m$ denote the lower  and upper bounds of all control channels, respectively. Hereafter, we sometimes write $d(t,x)$ as $d$ for brevity. 
\begin{assumption}\label{assump:lipschitz-bound-fg}
There exist positive constants $l_d,\ l_t,\ b_d $ such that for any $x,y \in X$ and $t,\tau\geq 0$, the following inequalities hold:
\begin{align} 
\left\| {d(t,x) - d(\tau ,y)} \right\| &\le {l_d}\left\| {x - y} \right\| + {l_t}\abs{t-\tau}, \label{eq:tilf-lipschitz-cond}\\
\left\| {d(t,0)} \right\| & \le {b_d}.  \label{eq:tilf-tilg-x0-bound}
\end{align}
Moreover, the constants $l_d,\ l_t$ and $\ b_d $ are known. 
\end{assumption}

\begin{remark}
This assumption essentially indicates that the growth rate of the uncertainty $d$ is bounded so that it can be estimated by the estimation law (to be presented in Section~\ref{sec:adaptive-estimate})  with quantifiable error bounds. 
\end{remark}



\begin{lemma}\label{lemma:d-bound}
Given Assumption~\ref{assump:lipschitz-bound-fg}, for any $t\geq 0$,  $x\in X$ and $u\in U$, we have 
\begin{equation}\label{eq:d-xdot-bound}
    \norm{d(t,x)}\leq \theta, \quad \norm{\dot{x}(t)}\leq \phi, 
\end{equation}
where
{\setlength{\mathindent}{1mm}
\begin{align}
    \theta &\trieq l_d \max_{x\in X} \norm{x} + b_{d}, \label{eq:theta-defn}\\
    \phi &\trieq  \max_{x\in X, u\in U}\norm{f(x)+g(x)u}+\theta. \label{eq:phi-defn}
\end{align}}
\end{lemma}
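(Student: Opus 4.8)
The plan is to establish the two bounds in sequence: first the pointwise bound on the uncertainty $d(t,x)$, and then feed that bound into the dynamics \eqref{eq:dynamics} to control the state derivative.

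For the first inequality, I would add and subtract $d(t,0)$ and apply the triangle inequality to obtain $\norm{d(t,x)} \leq \norm{d(t,x) - d(t,0)} + \norm{d(t,0)}$. The key observation is that invoking the Lipschitz condition \eqref{eq:tilf-lipschitz-cond} with the \emph{same} time argument, i.e. $\tau = t$, and $y = 0$ makes the time-difference term $l_t\abs{t-\tau}$ vanish, leaving $\norm{d(t,x) - d(t,0)} \leq l_d \norm{x}$. Combining this with the origin bound \eqref{eq:tilf-tilg-x0-bound} yields $\norm{d(t,x)} \leq l_d \norm{x} + b_d$. Since $X$ is compact, $\max_{x\in X}\norm{x}$ is finite and attained, so replacing $\norm{x}$ by this maximum recovers the definition of $\theta$ in \eqref{eq:theta-defn}, and the bound holds uniformly in $t$.

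For the second inequality, I would substitute the dynamics \eqref{eq:dynamics} and apply the triangle inequality once more: $\norm{\dot{x}} = \norm{f(x)+g(x)u+d(t,x)} \leq \norm{f(x)+g(x)u} + \norm{d(t,x)}$. The second term is bounded by $\theta$ from the first part, while the first term is maximized over the compact sets $X$ and $U$; this maximum is finite because $f$ and $g$ are continuous, hence so is $(x,u)\mapsto f(x)+g(x)u$, and $X\times U$ is compact. Adding the two bounds gives precisely $\phi$ as defined in \eqref{eq:phi-defn}.

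I do not anticipate a genuine obstacle, since both inequalities follow directly from the triangle inequality together with Assumption~\ref{assump:lipschitz-bound-fg}; the only point requiring care is justifying that the two maxima are well-defined, which rests on the compactness of $X$ (and of the box $U$) and the continuity of $f$ and $g$. It is worth emphasizing that the resulting bounds are \emph{uniform} in $t$, and the choice $\tau = t$ in the Lipschitz inequality is exactly what removes any residual dependence on time.
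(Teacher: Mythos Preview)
Your proposal is correct and follows essentially the same route as the paper: add and subtract $d(t,0)$, apply \eqref{eq:tilf-lipschitz-cond} with $\tau=t$ and \eqref{eq:tilf-tilg-x0-bound} to bound $\norm{d(t,x)}$ by $\theta$, then use the triangle inequality on the dynamics \eqref{eq:dynamics} and maximize over $X\times U$ to obtain $\phi$. Your additional remarks on compactness and continuity to justify the maxima are a welcome bit of extra rigor not spelled out in the paper.
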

\proof 
Note that  
\begin{IEEEeqnarray*}{rl}
   \norm{d(t,x)} & =   \norm{d(t,x) - d(t,0) + d(t,0)}  \\
   & \leq l_d \norm{x} +  \norm{d(t,0)}    \leq  l_d \norm{x} + b_d = \theta,
\end{IEEEeqnarray*}
where the two inequalities hold due to \cref{eq:tilf-lipschitz-cond} and \cref{eq:tilf-tilg-x0-bound} in Assumption~\ref{assump:lipschitz-bound-fg}. Therefore, the dynamics  \eqref{eq:dynamics} implies that 
\begin{IEEEeqnarray*}{rl}
   \norm{\dotx}&\leq \norm{f(x)+g(x)u} + \norm{d(t,x)} \leq \phi.
\end{IEEEeqnarray*}
The proof is complete. \qed
\subsection{Control Lyapunov Function}
Control Lyapunov function (CLF) provides a way to analyze the closed-loop stability and synthesize a stabilizing control signal without constructing an explicit control law  \cite{sontag1983clf,Artstein1983clf}.  A formal definition of CLF is given as follows.
\begin{defn}\label{defn:clf}
A continuously differentiable  function $V:
\mbR^n\rightarrow\mbR$ is a CLF for the system \eqref{eq:dynamics}, if it is positive definite and there exists a class $\mcK$\footnote{A function $\alpha:[0,a)\rightarrow[0,\infty)$ is said to belong to {\it class} $\mcK$ for some $a>0$, if it is strictly increasing and $\alpha(0)=0$.} function $\alpha(\cdot)$ such that 
{\setlength{\mathindent}{0mm}
\begin{align}
   \inf_{u\in U}\{L_f V(x)+ L_gV(x)u+V_x(x)d\} \leq -\alpha(V(x)), \label{eq:V-CLF-cond-inf}  
\end{align}}for all $t\geq 0$ and all $ x\in X$, 
where $V_x(x) \trieq \frac{\partial  V(x)}{\partial x}$, $L_fV(x)\triangleq \frac{\partial V(x)}{\partial x}f(x)$ and $L_gV(x)\triangleq \frac{\partial V(x)}{\partial x}g(x)$. 
\end{defn}Definition~\ref{defn:clf} allows us to consider the set of all stabilizing control signals for every point $x\in X$ and $t\geq 0$:
\begin{multline}\label{eq:kclf-defn}
    K_\textup{clf}(t,x)\triangleq \{u\in U: L_f V(x)+ L_gV(x)u+V_x(x)d(t,x) \\
    \leq -\alpha(V(x)) \}. 
\end{multline}
\subsection{Control Barrier Function}
CBFs are introduced to ensure {\it forward invariance} (often termed as {\it safety} in the literature) of a set, defined as some superlevel set of a function: 
$
    \mathcal{C}\trieq \{x\in X: h(x)\geq 0\},
$
where $h:\mbR^n\rightarrow \mbR$ is a continuously differentiable function. A formal definition of CBF is stated as follows \cite{ames2016cbf-tac}. 
\begin{defn}\label{defn:cbf} (CBF\cite{ames2016cbf-tac})
A continuously differentiable function  $h:\mbR^n\rightarrow \mbR$ is a (zeroing) CBF for the system \eqref{eq:dynamics}, if there exists an extended class $\mcK$ function\footnote{A function $\beta:(-b,a) \rightarrow (-\infty, \infty)$ is said to belong to {\it extended class} $\mcK$ for some $a,b>0$, if it is strictly increasing and $\beta(0)=0$.}  $\beta(\cdot)$ such that 
{\setlength{\mathindent}{0mm}
\begin{equation}\label{eq:h-CBF-cond-inf}
    \sup_{u\in U}\{L_fh(x)+L_gh(x)u+  h_x(x)d\} \geq -\beta(h(x)),\quad
\end{equation}}for all $x\in X$ and all $t\geq 0$, where $h_x(x)\triangleq\frac{\partial h(x)}{\partial x}$, $L_fh(x)\triangleq \frac{\partial h(x)}{\partial x}f(x)$ and $L_gh(x)\triangleq \frac{\partial h(x)}{\partial x}g(x)$.
\end{defn}
The existence of a CBF satisfying \eqref{eq:h-CBF-cond-inf} ensures that if $x(0)\in \mcC$, i.e. $h(x)\geq 0$, then there exists a control law $u(t)\in U$ such that for all $t\geq 0$, $x(t)\in \mcC$. Similarly, Definition~\ref{defn:cbf} allows us  to consider all control signals for each $x\in X$ and $t\geq 0$ that render $\mcC$ forward invariant:
{
\begin{multline}\label{eq:kcbf-defn}
    K_\textup{cbf}(t,x)\triangleq \{u\in U:  L_fh(x)+L_gh(x)u+  h_x(x)d \\
    \geq -\beta(h(x))\}. 
\end{multline}}
\begin{remark}
We consider CBFs with relative degree 1 in this work, i.e., assuming $L_gh(x) \not\equiv 0$ in \eqref{eq:h-CBF-cond-inf}; extension to the high relative-degree cases \cite[Section III.C]{ames2016cbf-tac} will be addressed as future work.  
\end{remark}
\begin{remark}
Definitions~\ref{defn:clf} and \ref{defn:cbf} utilize the true uncertainty $d$, which is not accessible in real applications. Therefore, it is impossible to verify whether a given function is a CLF (CBF) according to Definition~\ref{defn:clf} (Definition~\ref{defn:cbf}). 
\end{remark}
One way to resolve this issue is to derive a sufficient and {\it verifiable} condition for \eqref{eq:V-CLF-cond-inf} or \eqref{eq:h-CBF-cond-inf} using the worst-case bound on the uncertainty $d$ in \eqref{eq:d-xdot-bound}. The following lemma gives such conditions. The proof is straightforward considering the bound on $d$ in \eqref{eq:d-xdot-bound} and subsequently the inequalities $V_x(x)d\leq \norm{V_x(x)}\theta$ and $h_x(x)d\leq \norm{h_x(x)}\theta$. 
\begin{lemma}\label{lem:clf-cbf-verifiable-defn}
A continuously differentiable  function $V:
\mbR^n\rightarrow\mbR$ is a robust CLF for the uncertain system \eqref{eq:dynamics} under Assumption~\ref{assump:lipschitz-bound-fg}, if it is positive definite and there exists a class $\mcK$ function $\alpha(\cdot)$ such that 
{\setlength{\mathindent}{0mm}
\begin{align}
   \inf_{u\in U}\{L_f V(x)+ L_gV(x)u+\norm{V_x(x)}\theta\} \leq -\alpha(V(x)), \label{eq:V-CLF-cond-inf-R}  
\end{align}}for all $t\geq 0$ and all $ x\in X$. Similarly, a continuously differentiable function  $h:\mbR^n\rightarrow \mbR$ is a robust (zeroing) CBF for the uncertain system \eqref{eq:dynamics} under Assumption~\ref{assump:lipschitz-bound-fg}, if there exists an extended class $\mcK$ function  $\beta(\cdot)$ such that 
{\setlength{\mathindent}{0mm}
\begin{equation}\label{eq:h-CBF-cond-inf-R}
    \sup_{u\in U}\{L_fh(x)+L_gh(x)u- \norm{h_x(x)}\theta\} \geq -\beta(h(x)),\quad
\end{equation}}for all $x\in X$ and all $t\geq 0$. 
\end{lemma}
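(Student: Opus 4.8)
The plan is to derive each verifiable condition from the corresponding condition in Definition~\ref{defn:clf} or Definition~\ref{defn:cbf} by replacing the inaccessible disturbance term with its worst-case bound. The only ingredients needed are the pointwise estimate $\norm{d(t,x)}\leq\theta$ from Lemma~\ref{lemma:d-bound}, the Cauchy--Schwarz inequality, and the elementary fact that $\inf$ and $\sup$ preserve pointwise inequalities.

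For the CLF part, I would first fix $x\in X$ and $t\geq 0$ and bound the disturbance term from above: by Cauchy--Schwarz and \eqref{eq:d-xdot-bound},
\[
V_x(x)d \leq \norm{V_x(x)}\norm{d} \leq \norm{V_x(x)}\theta.
\]
Adding the $u$-dependent terms gives, for every $u\in U$,
\[
L_fV(x)+L_gV(x)u+V_x(x)d \leq L_fV(x)+L_gV(x)u+\norm{V_x(x)}\theta.
\]
Because this holds termwise in $u$, taking the infimum over $u\in U$ on both sides keeps the ordering, so the left infimum is at most the right infimum. The hypothesis \eqref{eq:V-CLF-cond-inf-R} bounds the right infimum by $-\alpha(V(x))$, which is exactly \eqref{eq:V-CLF-cond-inf}; together with positive definiteness of $V$, this certifies $V$ as a CLF in the sense of Definition~\ref{defn:clf}.

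The CBF part is symmetric with every inequality reversed. Here I would bound the disturbance term from below, $h_x(x)d \geq -\norm{h_x(x)}\norm{d} \geq -\norm{h_x(x)}\theta$, so that for every $u\in U$
\[
L_fh(x)+L_gh(x)u+h_x(x)d \geq L_fh(x)+L_gh(x)u-\norm{h_x(x)}\theta;
\]
taking the supremum over $u\in U$ then yields \eqref{eq:h-CBF-cond-inf} from \eqref{eq:h-CBF-cond-inf-R}. No real obstacle arises; the single point demanding care is matching the direction of the disturbance bound to the direction of the optimization, i.e., upper-bounding $V_x d$ against an $\inf/\leq$ statement but lower-bounding $h_x d$ against a $\sup/\geq$ statement, both using the same estimate $\norm{d}\leq\theta$.
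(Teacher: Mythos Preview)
Your proposal is correct and follows essentially the same approach as the paper, which simply remarks that the claim is straightforward from the bound $\norm{d}\le\theta$ in \eqref{eq:d-xdot-bound} together with the inequalities $V_x(x)d\le\norm{V_x(x)}\theta$ and $h_x(x)d\ge-\norm{h_x(x)}\theta$. Your write-up spells out the Cauchy--Schwarz step and the preservation of the inequality under $\inf$/$\sup$ more explicitly, but the underlying argument is identical.
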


\subsection{Standard QP Formulation}
The admissible sets of control signals defined in \eqref{eq:kclf-defn} and \eqref{eq:kcbf-defn} inspire optimization based control. Recent work shows that CLF and CBF conditions can be unified into a QP \cite{ames2016cbf-tac}:
{\setlength{\mathindent}{0cm}
\setlength{\belowdisplayskip}{0pt} \setlength{\belowdisplayshortskip}{0pt}
\begin{align}
    \hline
    & u^\star(t,x)  =  \argmin_{(u,\delta)\in \mbR^{m+1}} \frac{1}{2}u^TH(x)u + p\delta^2  \quad  (\textup{\QP}) \nonumber \\ 
     \textup{s.t. } &
     \small
    L_fV(x)+L_gV(x)u + V_x(x)d +\alpha(V(x))< \delta, \label{eq:V-complete-inequality}\\
    & \small{L_fh(x)+L_gh(x)u+ h_x(x) d+\beta(h(x))}>0,\label{eq:h-complete-inequality}\\ 
   & u \in U, \\
    \hline \nonumber
\end{align}}where $H(x)$ is a (pointwise) positive definite matrix and $\delta$ is a positive constant to relax the CLF constraint that is penalized by $p>0$. In this formulation, the CBF condition is often associated with safety and therefore is imposed as a {\it hard} constraint. In contrast, the CLF constraint is related to control objective (e.g. tracking a reference) and could be relaxed to ensure the feasiblity of the QP when safety is a major concern. Therefore, it is imposed as a {\it soft} constraint.


\begin{remark}\label{remark:qp-not-implementable}
The constraints \eqref{eq:V-complete-inequality} and \eqref{eq:h-complete-inequality} depend on the true uncertainty $d$, which makes (\QP) not implementable in practice. Although the worst-case bound \eqref{eq:d-xdot-bound} can be used to derive robust versions of  \eqref{eq:V-complete-inequality} and \eqref{eq:h-complete-inequality} as done in \cite{nguyen2016optimalACC}, the resulting constraints are independent of the {\it actual} uncertainty or disturbance and thus can be overly conservative, as shown in Section~\ref{sec:simulation}. 
\end{remark}

\section{ Adaptive Robust QP Control using CLBFs}\label{sec:ad-CLBF QP}
In this section, we first introduce an adaptive estimation scheme to estimate the uncertainty with pre-computable error bounds, which can be systematically improved by reducing the  estimation sampling time. We then show how the estimated uncertainty, as well as the error bounds, can be used to formulate a robust QP, while the introduced conservatism  can be  arbitrarily reduced, subject to only hardware limitations.


\subsection{Adaptive Estimation of the Uncertainty}\label{sec:adaptive-estimate}
We extend the piecewise-constant adaptive (PWCA) law in \cite[Section 3.3]{naira2010l1book} to estimate the pointwise value of $d(t,x)$ at each time instant $t$. Similar results are available in \cite{wang2017adaptiveMPC} which considers control non-affine dynamics under more sophisticated assumptions about the uncertainty, and in \cite{zhao2020lpv} where the nominal dynamics is described by a linear parameter-varying model.
The PWCA law consists of two elements, i.e., a state predictor and an adaptive law, which are explained as follows. The state predictor is defined as:
\begin{equation}\label{eq:state-predictor}
\begin{split}
    \dot{\hatx} & =  f(x) +  g(x)u + \hd(t) -a\tilx,\quad \hatx(0)= x_0 ,
\end{split}
\end{equation}
where $\tilx \triangleq \hatx - x$ is the prediction error,  $a$ is an arbitrary positive constant. A discussion about the role of $a$ available in \cite{zhao2020lpv}. The adaptive estimation, $\hd(t)$, is updated in a piecewise-constant way:
\begin{equation}\label{eq:adaptive_law}
\left\{
\begin{split}
   \hd(t)
    &= 
    \hd(iT),  \quad t\in [iT, (i+1)T),\\
    \hd(iT)
    &= - \frac{a}{e^{aT}-1}\tilde{x}(iT),  
\end{split}\right.
\end{equation}
where $T$ is the estimation sampling time, and $i=0,1,2,\cdots$. 
\begin{remark}
The PWCA law does not estimate the analytic expression of the uncertainty; instead it estimates the {\it pointwise value} of the uncertainty at each time instant. As will be shown in Section~\ref{sec:sub-aR-QP}, the estimated uncertainty can be used to formulate a QP with robust constraints. 
\end{remark}

Let us first define: 
\begin{align}
    \gamma(T) &\triangleq 2\sqrt{n}\eta T + \sqrt{n}(1- e^{-aT})\theta, \label{eq:gammaTs-defn} \\
    \eta & \trieq l_t+l_d\phi, \label{eq:beta-defn}
\end{align}
where $\theta$ and $\phi$ are defined in \eqref{eq:theta-defn} and \eqref{eq:phi-defn}, respectively. We next establish the estimation error bounds associated with the estimation scheme in \eqref{eq:state-predictor} and \eqref{eq:adaptive_law}.
\begin{lemma}\label{lemma:estiamte-error-bound}
Given the dynamics \eqref{eq:dynamics}, and the estimation law in \eqref{eq:state-predictor} and \eqref{eq:adaptive_law}, subject to Assumption~\ref{assump:lipschitz-bound-fg}, the estimation error can be bounded as 
\begin{equation}\label{eq:estimation_error_bound}
     \norm{\hd(t)-d(t,x(t))} \leq
     \left\{
    \begin{array}{ll}
    \theta,\quad &\forall~ 0\leq t<T, \\
    \gamma(T), \quad &\forall~ t\geq T,
    \end{array}
    \right.
\end{equation}
Moreover, 
$
    \lim_{T\rightarrow 0} \gamma(T) = 0.
$
\end{lemma}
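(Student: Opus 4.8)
The plan is to exploit the linear error dynamics of the predictor together with the specific gain appearing in the piecewise-constant law \eqref{eq:adaptive_law}. First I would subtract the true dynamics \eqref{eq:dynamics} from the predictor \eqref{eq:state-predictor} to obtain the scalar-damped, forced error system $\dot{\tilx} = -a\tilx + \left(\hd(t) - d(t,x)\right)$ with $\tilx(0)=0$, the latter because $\hatx(0)=x_0$. On the first interval $[0,T)$ the law gives $\hd(t)=\hd(0)=-\frac{a}{e^{aT}-1}\tilx(0)=0$, so the estimation error equals $-d(t,x)$ and the bound $\norm{\hd(t)-d(t,x)}\le\theta$ follows at once from Lemma~\ref{lemma:d-bound}.

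For $t\ge T$ the crux is a closed-form identity for the sampled error. Integrating the error system over one interval $[iT,(i+1)T)$, on which $\hd$ is the constant $\hd(iT)=-\frac{a}{e^{aT}-1}\tilx(iT)$, yields
\[
\tilx((i+1)T) = e^{-aT}\tilx(iT) + \hd(iT)\tfrac{1-e^{-aT}}{a} - \int_0^T e^{-a(T-s)}d(iT+s,x(iT+s))\,ds.
\]
Substituting the gain shows that the first two terms cancel exactly, leaving $\tilx((i+1)T) = -\int_0^T e^{-a(T-s)}d(iT+s,x(iT+s))\,ds$. This cancellation is precisely what the gain $-\frac{a}{e^{aT}-1}$ is engineered to produce, and it decouples the sampled error from the accumulated history. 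Consequently, for $t\in[iT,(i+1)T)$ with $i\ge 1$, the active estimate can be written as the weighted integral $\hd(iT)=\int_0^T w(s)\,d((i-1)T+s,x((i-1)T+s))\,ds$ over the \emph{previous} interval, where $w(s)=\frac{a}{e^{aT}-1}e^{-a(T-s)}\ge 0$ and $\int_0^T w(s)\,ds=e^{-aT}$.

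With this identity I would bound $\hd(iT)-d(t,x(t))$ by the splitting $\hd(iT)-d(t,x(t)) = \int_0^T w(s)\left[d((i-1)T+s,x)-d(t,x(t))\right]ds + (e^{-aT}-1)d(t,x(t))$. The second term is bounded by $(1-e^{-aT})\theta$ using $\norm{d}\le\theta$ from Lemma~\ref{lemma:d-bound}. For the first term, the Lipschitz bound \eqref{eq:tilf-lipschitz-cond} together with $\norm{\dotx}\le\phi$ controls $\norm{d((i-1)T+s,x)-d(t,x(t))}$ by $(l_d\phi+l_t)$ times the elapsed time; since the data lie in the previous interval while $t$ lies in the current one, this time gap is at most $2T$, producing the factor $2\eta T$ with $\eta=l_t+l_d\phi$ as in \eqref{eq:beta-defn}. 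Aggregating the resulting componentwise estimates through $\norm{\cdot}\le\sqrt{n}\,\infnorm{\cdot}$ supplies the $\sqrt{n}$ factors and yields $\gamma(T)$ in \eqref{eq:gammaTs-defn}. The limit $\lim_{T\to 0}\gamma(T)=0$ is then immediate, since both $2\sqrt{n}\eta T$ and $\sqrt{n}(1-e^{-aT})\theta$ vanish as $T\to 0$.

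The step I expect to be the main obstacle is establishing the cancellation identity cleanly, i.e. verifying that the homogeneous decay and the contribution of the constant estimate annihilate each other exactly at the sampling instants, which is what reduces $\tilx((i+1)T)$ to a single-interval integral of $d$. The remaining delicate point is the bookkeeping of the one-interval delay, namely that the estimate active on $[iT,(i+1)T)$ is built from data on $[(i-1)T,iT)$; this is what is responsible for the factor $2$ rather than $1$ in the $\eta T$ term, and overlooking it would give an overly optimistic bound.
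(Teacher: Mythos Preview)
Your proposal is correct and follows essentially the same route as the paper: derive the error dynamics, show the gain-induced cancellation at sampling instants so that $\tilx((i+1)T)$ is a single-interval weighted integral of $d$, then bound the difference $\hd(t)-d(t,x(t))$ via the Lipschitz estimate over a time gap of at most $2T$ together with the uniform bound $\norm{d}\le\theta$. The one technical difference is that the paper applies the first mean value theorem \emph{componentwise} to the integral, obtaining $\hd(t)=e^{-aT}\bbracket{d_j(\tau_j^*)}$ with a different $\tau_j^*$ per coordinate; this is what forces the detour through $\infnorm{\cdot}$ and produces the $\sqrt{n}$ factors in $\gamma(T)$. Your weighted-average splitting avoids the mean value theorem altogether and, since $w\ge 0$ with $\int_0^T w = e^{-aT}$, already yields the vector bound $2\eta T\,e^{-aT}+(1-e^{-aT})\theta$ directly in the $2$-norm, which is strictly tighter than $\gamma(T)$. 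So your final remark about ``aggregating componentwise estimates through $\norm{\cdot}\le\sqrt{n}\infnorm{\cdot}$'' is unnecessary in your own argument; the $\sqrt{n}$ never arises, and the stated bound follows a fortiori.
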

\begin{proof}
See Appendix. 
\end{proof}
\begin{remark}
Lemma~\ref{lemma:estiamte-error-bound} implies that the uncertainty estimation can be made arbitrarily accurate for $t\geq T$, by reducing $T$, the latter only subject to hardware limitations. Additionally, the estimation cannot be arbitrarily accurate for $t\in[0,T)$. This is because the estimate in $[0,T)$ depends on $\tilx(0)$ according to \eqref{eq:adaptive_law}. Considering that $\tilx(0)$ is purely determined by the initial state of the system, $x_0$, and the initial state of the predictor, $\hatx_0$,  it does not contain any information of the uncertainty. Since $T$ is usually very small in practice,  lack of a {\it tight} estimation error bound for the interval $[0,T)$ will not cause an issue from a practical point of view. 
\end{remark}


\subsection{Adaptive Robust QP Formulation}\label{sec:sub-aR-QP}
Let
{\setlength{\mathindent}{0mm} 
\begin{IEEEeqnarray}{rl} 
\Psi_V(t,x,u)  \triangleq & L_fV(x)+L_gV(x)u+  V_x(x)\hd(t), \nonumber \\
& +\norm{V_x(x)}\gamma(T) \label{eq:psi_V-defn}, \\
 \Psi_h(t,x,u)\triangleq & L_fh(x)+L_gh(x)u+ h_x(x)\hd(t), \nonumber \\
    & -\norm{h_x(x)}\gamma(T).\label{eq:psi_h-defn}
\end{IEEEeqnarray}}
We are ready to present the main result in the following theorem. 
\begin{theorem}\label{theorem:h_V_cond_adaptive}
For any $t\geq T$,
\begin{enumerate}
 \item the condition 
    \begin{equation} \label{eq:V-inequality-robust}
 \inf_{u\in U} \Psi_V(t,x,u) \leq  -\alpha(V(x))
 \end{equation}
 is a sufficient condition for \eqref{eq:V-CLF-cond-inf}, and also a necessary condition for \eqref{eq:V-CLF-cond-inf} when $T$ $\rightarrow$ 0.
 \item the condition 
 \begin{equation} \label{eq:h-inequality-robust}
\sup_{u\in U} \Psi_h(t,x,u) \geq -\beta(h(x))
\end{equation}
is a sufficient condition for 
 \eqref{eq:h-CBF-cond-inf}, and also a necessary condition for 
 \eqref{eq:h-CBF-cond-inf} when $T$ $\rightarrow$ 0.

\end{enumerate}
\end{theorem}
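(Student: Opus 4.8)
The plan is to exploit the pointwise estimation error bound from Lemma~\ref{lemma:estiamte-error-bound}, which for $t\geq T$ reads $\norm{\hd(t)-d(t,x(t))}\leq\gamma(T)$, together with the Cauchy--Schwarz inequality, to sandwich the true but inaccessible directional derivatives $V_x(x)d$ and $h_x(x)d$ between computable surrogates. The single algebraic fact driving everything is that, writing $d=\hd(t)+(d-\hd(t))$, we have $|V_x(x)(d-\hd(t))|\leq\norm{V_x(x)}\gamma(T)$ and likewise $|h_x(x)(d-\hd(t))|\leq\norm{h_x(x)}\gamma(T)$ for all $t\geq T$. This is exactly why the correction terms $\pm\norm{V_x(x)}\gamma(T)$ and $\mp\norm{h_x(x)}\gamma(T)$ are built into the definitions \eqref{eq:psi_V-defn} and \eqref{eq:psi_h-defn}.

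For the sufficiency claim in part~(1), I would first establish the pointwise domination $L_fV(x)+L_gV(x)u+V_x(x)d\leq\Psi_V(t,x,u)$ for every $u\in U$, which follows at once from the above bound, since $V_x(x)d$ is replaced by the over-estimate $V_x(x)\hd(t)+\norm{V_x(x)}\gamma(T)$. Because this holds for every admissible $u$, taking the infimum over $u\in U$ on both sides preserves the order and yields $\inf_{u\in U}\{L_fV(x)+L_gV(x)u+V_x(x)d\}\leq\inf_{u\in U}\Psi_V(t,x,u)$; hence \eqref{eq:V-inequality-robust} forces the left-hand side below $-\alpha(V(x))$, which is precisely \eqref{eq:V-CLF-cond-inf}. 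Part~(2) is entirely symmetric: one shows $L_fh(x)+L_gh(x)u+h_x(x)d\geq\Psi_h(t,x,u)$ for every $u$ (now an under-estimate, owing to the $-\norm{h_x(x)}\gamma(T)$ term), and takes the supremum over $u\in U$, which again preserves the order, to conclude that \eqref{eq:h-inequality-robust} implies \eqref{eq:h-CBF-cond-inf}.

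For the necessity claims as $T\to 0$, I would invoke the second conclusion of Lemma~\ref{lemma:estiamte-error-bound}, namely $\gamma(T)\to 0$. The crucial point is that the gap between $\Psi_V(t,x,u)$ and the true quantity is uniform in $u$, since $|\Psi_V(t,x,u)-(L_fV(x)+L_gV(x)u+V_x(x)d)|\leq 2\norm{V_x(x)}\gamma(T)$ carries no dependence on $u$. A uniform bound on the integrand transfers to the infima, so $|\inf_{u\in U}\Psi_V(t,x,u)-\inf_{u\in U}\{L_fV(x)+L_gV(x)u+V_x(x)d\}|\leq 2\norm{V_x(x)}\gamma(T)\to 0$. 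Thus in the limit the robust infimum coincides with the true infimum, so whenever \eqref{eq:V-CLF-cond-inf} holds the robust condition \eqref{eq:V-inequality-robust} is recovered, establishing necessity; the CBF case follows identically with suprema in place of infima.

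The arguments are short once Lemma~\ref{lemma:estiamte-error-bound} is in hand, so I do not anticipate a genuine obstacle; the only points requiring care are the interchange of the $\inf$/$\sup$ with the inequalities and with the limit $T\to 0$. The former is justified because the surrogate dominates (respectively is dominated by) the true expression at every $u$, and the latter because the approximation error is bounded uniformly in $u$ by $2\norm{V_x(x)}\gamma(T)$ (respectively $2\norm{h_x(x)}\gamma(T)$), which collapses to zero. One should also note that the restriction $t\geq T$ is essential, since the sharp error bound $\gamma(T)$ of Lemma~\ref{lemma:estiamte-error-bound} is available only on that interval.
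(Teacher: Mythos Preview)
Your proposal is correct and follows essentially the same route as the paper: both use the decomposition $d=\hd(t)+(d-\hd(t))$ together with the Cauchy--Schwarz bound and Lemma~\ref{lemma:estiamte-error-bound} to sandwich the true directional derivative by its computable surrogate, and then pass to the $\inf$/$\sup$ over $u\in U$. Your treatment of necessity is in fact slightly more explicit than the paper's---you quantify the gap between the true and robust optimal values as at most $2\norm{V_x(x)}\gamma(T)$ (respectively $2\norm{h_x(x)}\gamma(T)$), uniform in $u$, whereas the paper simply asserts that the two sides ``equate'' when $T\to 0$---but the underlying idea is the same.
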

\begin{proof} Due to space limit, we only prove 2), while 1) can be proved analogously.  For proving that \eqref{eq:h-inequality-robust} is sufficient for \eqref{eq:h-CBF-cond-inf}, comparison of \eqref{eq:h-CBF-cond-inf} and \eqref{eq:h-inequality-robust} indicates that we only need to show 
$
    h_x(x)d(t,x) \geq h_x(x)\hd(t)-\norm{h_x(x)}\gamma(T)
$ for any $t\geq T$.
The preceding inequality actually holds since 
{\setlength{\mathindent}{0mm}
\begin{align*}
    h_x(x)d(t,x) &=  h_x(x)\left(\hd(t)+d(t,x)-\hd(t)\right)\\
   & \geq h_x(x)\hd(t) - \norm{h_x(x)}\norm{d(t,x)-\hd(t)} \\
   & \geq  h_x(x)\hd(t) - \norm{h_x(x)}\gamma(T), 
\end{align*}}where the last inequality is due to  \eqref{eq:estimation_error_bound}.  For proving  the necessity, we notice that when $T \rightarrow 0$, $\hd(t) \rightarrow d(t,x)$ for any $t\geq T$ according to Lemma~\ref{lemma:estiamte-error-bound}, which implies 
that (LHS) of \eqref{eq:h-CBF-cond-inf} equates (LHS) of \eqref{eq:h-inequality-robust} considering \eqref{eq:psi_h-defn}.
Therefore, \eqref{eq:h-inequality-robust} is also necessary for \eqref{eq:h-CBF-cond-inf} for $t\geq T$ when $T \rightarrow 0$. \qed
\end{proof}

\begin{remark}\label{remark:conservatism-discussion}
Theorem~\ref{theorem:h_V_cond_adaptive} indicates that the condition \eqref{eq:V-inequality-robust} (\eqref{eq:h-inequality-robust}) that depends on the adaptive estimation is always a sufficient condition for \eqref{eq:V-CLF-cond-inf} (\eqref{eq:h-CBF-cond-inf}) that depends on the true uncertainty  for $t\geq T$. Additionally, the conservatism of the condition \eqref{eq:V-inequality-robust} or \eqref{eq:h-inequality-robust} can be arbitrarily reduced by reducing $T$.
\end{remark}

With the inequalities \eqref{eq:V-inequality-robust} and \eqref{eq:h-inequality-robust} that depend on the adaptive estimation of the uncertainty, we can formulate  a robust QP, which we call aR-QP:
{\setlength{\mathindent}{0mm}
\setlength{\belowdisplayskip}{0pt} \setlength{\belowdisplayshortskip}{0pt}
\begin{align}
    \hline
       & u^\star(t,x)  =  \argmin_{(u, \delta)\in \mbR^{m+1}} \frac{1}{2}u^TH(x)u + p\delta^2 \  (\textup{\RaQP}) \nonumber \\ 
     \textup{s.t. } &
     \small
     \Psi_V(t,x,u) + \alpha(V(x)) < \delta, \label{eq:V-inequality-robust-in-qp}  \\
    & 
    \begin{footnotesize}
    \Psi_h(t,x,u)+\beta(h(x)) >0, 
    \end{footnotesize}  \label{eq:h-inequality-robust-in-qp} 
    \\
    & u\in U. \\
    \hline\nonumber
    \vspace{-20mm}
\end{align}}Here, $\Psi_V$ and $\Psi_h$ are defined in \eqref{eq:psi_V-defn} and \eqref{eq:psi_h-defn}, respectively.
\begin{remark}
The  estimation sampling time  $T$ can be different from the sampling time $T_{qp}$ for solving the (\RaQP) problem. Considering the effect of $T$ on the estimation accuracy, as well as the relatively low computational cost of the adaptive estimation compared to solving the (\RaQP) problem, $T$ can be set much smaller than $T_{qp}$.  
\end{remark}
\vspace{-0mm}
\section{Simulation results}\label{sec:simulation}
In this section, we validate the theoretical development using the adaptive cruise control (ACC) problem from \cite{ames2016cbf-tac}.
\vspace{-1mm}
\subsection{ACC Problem Setting}
The lead and following cars are modeled as point masses moving on a straight road. The following car is equipped with an ACC system, and and its objective is to  cruise at a given constant speed, while maintaining a safe following distance as specified by a time headway. Let $v_l$ and $v_f$ denote the speeds (in $\mbox{m/s}$) of the lead car and the following car, respectively, and $D$ be the distance (in $\mbox{m}$) between the two vehicles. Denoting $x=[v_l,v_f,D]$ as the system state, the dynamics of the system can be described as 
{ \begin{equation}
    \begin{bmatrix}
    \dot{v_l} \\
    \dot{v_f}\\
    \dot{D}  
    \end{bmatrix}
     = \underbrace{\begin{bmatrix}
    a_l \\
    0\\
    v_l-v_f
    \end{bmatrix}}_{f(x)}
    + \underbrace{\begin{bmatrix}
    0\\
    1/m \\
    0
    \end{bmatrix}}_{g(x)} u +
    \underbrace{\begin{bmatrix}
    0 \\
   -F_r/m +d_0(t) \\
    0
    \end{bmatrix}}_{d(t,x)},
\end{equation}}where $u$ and $m$ are the control input and the mass of the following car, respectively, $a_l$ is the acceleration of the lead car, $F_r = f_0+f_1v_f + f_2 v_f^2$ is the aerodynamic drag term with unknown constants $f_0, f_1$ and $f_2$, $d_0(t)$ is external disturbance (reflecting the unmodeled road condition or aerodynamic force).

The safety constraint requires the following car to keep a safe distance from the lead car, which can be expressed as $D/v_f\geq \tau_d$ with $\tau_d$ being the desired time headway. Defining the function $h=D-\tau_d v_f$, the safety constraint can be expressed as $h\geq 0$. In terms of control objective,  the following car should achieve a desired speed $v_d$ when adequate headway is ensured. This objective naturally leads to the CLF, $V = (v_f- v_d)^2$.

The parameters used in the simulation are shown in \cref{table:acc-parameters}, where $m, f_0, f_1, f_2, \tau_d$ are selected following \cite{ames2016cbf-tac}. The disturbance is set to $d_0(t) = 0.2 g\sin(2\pi 10t)$ following \cite{xu2015robustness}. The input constraints are set to $-u_{\max} \leq u \leq u_{\max}$, where $u_{\max} = 0.4\ mg$ with $g$ being the gravitational constant.



{\setlength{\tabcolsep}{2pt}
\vspace{-2mm}
\begin{table}[htb]
\caption{Parameter Settings}\label{table:acc-parameters}
\begin{tabular}{ll|ll|ll|ll}
\toprule
$g$   & 9.81 $\mbox{Ns}^2\mbox{/m}$ & $f_2$    & 0.25 $\mbox{Ns}^2\mbox{/m}$ & $\tau_d$ & 1.8 $\mbox{s}$     & $p$         & 100                               \\
$m$   & 1650 $\mbox{kg}$     & $v_d$    & 22 $\mbox{m/s}$      & $T_{qp}$      & 0.01 $\mbox{s}$   & $\alpha(V)$    & 5$V$                  \\
$f_0$ & 0.1 $\mbox{N}$       & $x(0)$   & [18 12 80]           &$T$    & 1 $\mbox{ms}$  &  $\beta(h)$ &  $h$    \\
$f_1$ & 5 $\mbox{Ns/m}$      & $u_{\max}$ & 0.4 $mg$              & $a$         & 1          &             &     \\    \bottomrule              
\end{tabular}
\vspace{-3mm}
\end{table}
}

\subsection{Uncertainty Estimation Setting}
According to \eqref{eq:gammaTs-defn}, given the estimation sampling time $T$,  one would like to obtain the smallest values of the constants in Assumption~\ref{assump:lipschitz-bound-fg} to get the tightest estimation error bound, $\gamma(T)$. For this ACC problem,  the constants in Assumption~\ref{assump:lipschitz-bound-fg} are selected as
 {\setlength{\mathindent}{0pt} 
\begin{align*}
  &  l_t = (0.2g(2\pi)10)\xi,\ l_d = (f_1+2f_2v_{\max})\xi,\ b_d = (0.2g) \xi  , 
\end{align*}}where $v_{\max}= 160\ \textup{km/h}$ is the maximum speed considered,  $\xi\geq 1$ is a constant to reflect the conservatism in estimating the constants $l_t, l_d$  and $b_d$
that satisfy \cref{eq:tilf-lipschitz-cond,eq:tilf-tilg-x0-bound} in \cref{assump:lipschitz-bound-fg}. We set $\xi=2$. 
We further set $a=1$ in \eqref{eq:state-predictor}. With this setting, the estimation error bounds under different $T$ are computed according to \cref{eq:gammaTs-defn} and  listed in \cref{table:gammaTs-vs-Ts-acc}. For the simulation results in \cref{sec:simu_results_delta_tx}, $T=1$~ms is selected. 
\vspace{-5mm}
\begin{table}[htb]
\caption{$\gamma(T)$ versus $T$}\label{table:gammaTs-vs-Ts-acc}
    \centering
    \begin{tabular}{lllrl}
    \toprule
$T$         & 10 ms & 1 ms  & 0.1 ms & 0.01 ms \\
$\gamma(T)$ & 2.98  & 0.298 & 0.0298 & 0.00298 \\
\bottomrule
\end{tabular}
\vspace{-5mm}
\end{table}
\subsection{QP Setting}
We consider several QP controllers:
\begin{itemize}
    \item A standard QP controller defined by (\QP) using the true uncertainty $d(t,x)$, 
    \item A  standard QP controller ignoring the uncertainty, obtained by setting $d(t,x)\equiv 0 $ in (\QP),
    \item A robust QP (R-QP) controller proposed in \cite{nguyen2016optimalACC} using the worst-case bound on $d(t,x)$ in \eqref{eq:d-xdot-bound}, which was obtained by setting $\hat d(t) \equiv0$ in (\RaQP), and
    \item A adaptive robust QP (aR-QP) controller from (\RaQP). 
\end{itemize}
Note that the first controller is not implementable due to its reliance on the true uncertainty model (see Remark~\ref{remark:qp-not-implementable}), and is included to merely show the {\it ideal} performance. 
The objectives in both (\QP) and (\RaQP) are set to be 
$
\min \frac{1}{2}Hu^2 + \frac{1}{2}p \delta^2,
$
where $H = 1/m^2$, $p=100$. We further set $\alpha(V) = 5 V$ and $\beta(h) = h$. Under the above settings, the conditions \eqref{eq:V-CLF-cond-inf-R} and \eqref{eq:h-CBF-cond-inf-R} in Lemma~\ref{lem:clf-cbf-verifiable-defn} can be verified, which indicates that $V$ and $h$ are indeed a CLF and CBF for the uncertain system, respectively. We set $T_{qp}=0.01$ second. 
\subsection{Simulation Results}\label{sec:simu_results_delta_tx}
The results are shown in Fig.~\cref{fig:xt_uncert_xu,fig:xt_uncert_h,fig:xt_uncert_hd}. As expected, the QP controller using the true uncertainty $d(t,x)$ achieved good performance in tracking the desired speed when enforcing safety was not a major concern, while maintaining the safety ($h$ was above $0$) throughout the simulation with minimal conservatism ($h$ was quite close to $0$ when enforcing safety was a major concern). On the other hand, the QP controller ignoring the uncertainty did not provide satisfactory tracking performance (note the relatively large tracking error between $25$ and $32$ seconds in Fig.~\ref{fig:xt_uncert_xu}) even when there was adequate headway; it also failed to guarantee the safety throughout the simulation ($h$ was below $0$ during some intervals). Although the R-QP controller did provide safety guarantee as shown by the trajectory of $h$ in Fig.~\ref{fig:xt_uncert_h}, it yielded rather conservative performance: (1) $h$ was constantly far way from $0$; (2) speed tracking objective was often compromised more than necessary (note the speed decrease between $2$ and $5$ seconds).
\begin{figure}[htb]
\vspace{-5mm}
    \centering
    \includegraphics[width=.95\columnwidth]{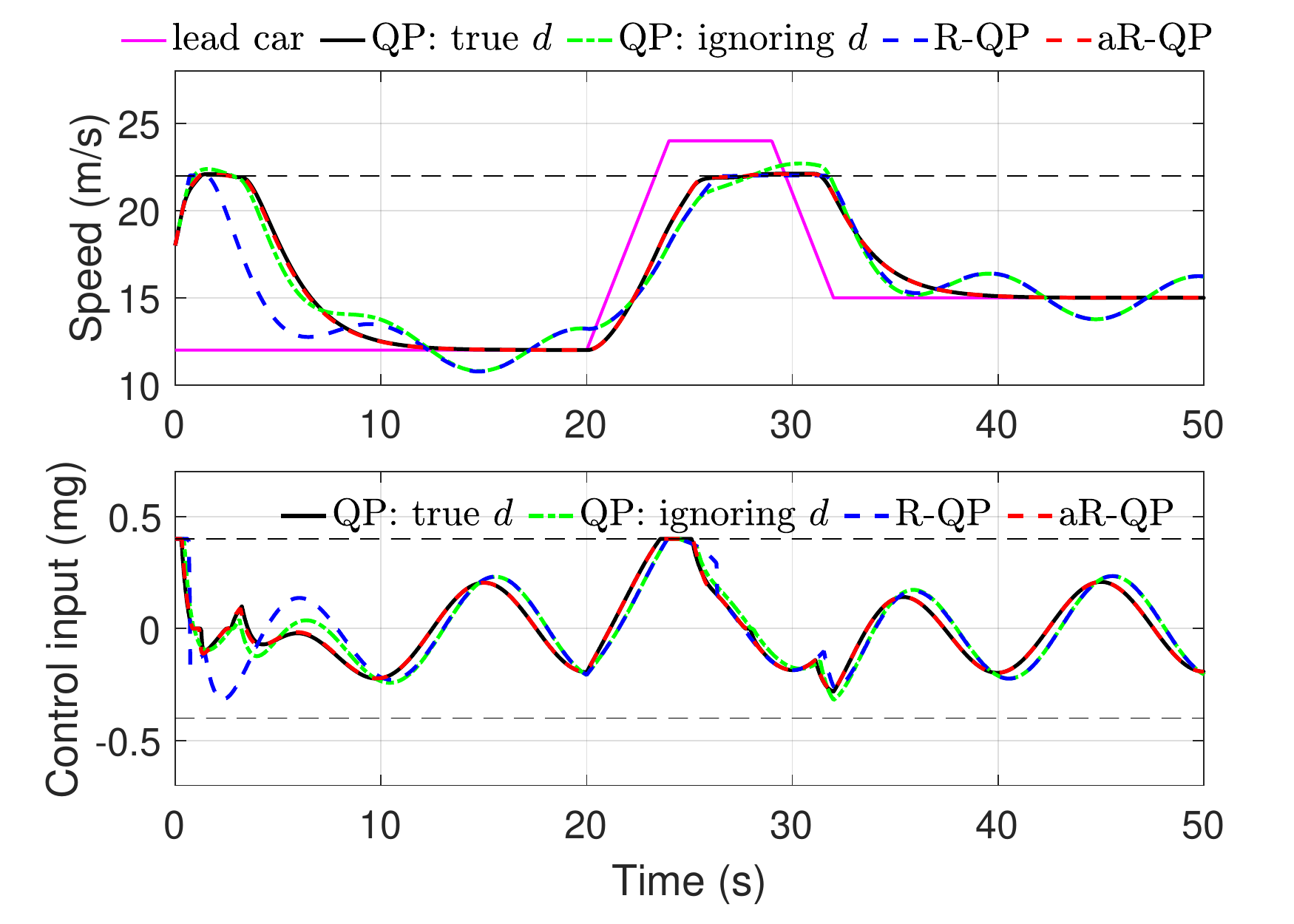}
    \vspace{-2mm}
     \caption{Trajectories of  the following car speed  (top) and control input (bottom). Black dashed lines denote the desired speed (top) and input limits (bottom).}
    \label{fig:xt_uncert_xu}
    \vspace{-3mm}
\end{figure}

Finally, utilizing the estimated uncertainty, 
the aR-QP controller almost recovered the performance of the QP controller using the true uncertainty, in terms of both the car speed and control input. It also maintained the safety throughout the simulation with slightly increased conservatism compared to the performance of the QP controller using the true uncertainty, as shown in Fig.~\ref{fig:xt_uncert_h}. Figure~\ref{fig:xt_uncert_hd} depicts the trajectories of the true and estimated uncertainties with error bounds also displayed. One can see that the estimated uncertainty overlaps the true uncertainty after a {\it single}  sampling interval, $T$. Additionally, the true uncertainty always lies within a tube determined by the estimated uncertainty and the error bounds defined in \eqref{eq:estimation_error_bound}, which is consistent with Lemma~\ref{lemma:estiamte-error-bound}.
\begin{figure}[htb]
\vspace{-2mm}
    \centering
    \includegraphics[width=.95\columnwidth]{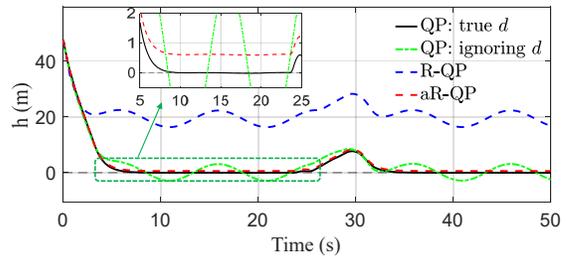}
    \vspace{-2mm}
    \caption{Trajectories of the barrier function}
    \label{fig:xt_uncert_h}
    \vspace{-3mm}
\end{figure}
\begin{figure}[htb]
 \vspace{-5mm}
    \centering
    \includegraphics[width=0.95\columnwidth]{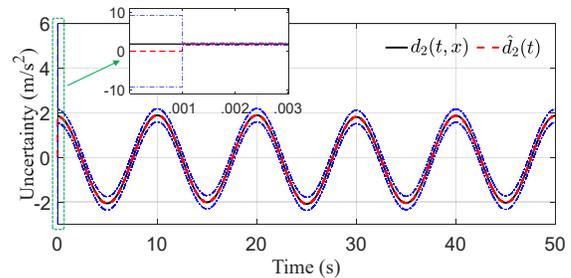}
    \vspace{-2mm}
    \caption{Trajectories of the estimated and true uncertainties. Blue dash-dotted lines denote the estimation error bounds computed according to \eqref{eq:estimation_error_bound}. $d_2(t,x)$ ($\hat d_2(t)$) is the second element of  $d(t,x)$ ($\hat d(t)$).}
    \label{fig:xt_uncert_hd}
    \vspace{-5mm}
\end{figure}

\section{CONCLUSIONS}\label{sec:conclusion}

This paper summarizes an adaptive estimation inspired approach to control Lyapunov and barrier functions based control via QPs in the presence of time-varying and state-dependent uncertainties. An adaptive estimation law is proposed to estimate the pointwise value of the uncertainties with pre-computable error bounds. The estimated uncertainty and the error bounds are then used to formulate a robust QP, which ensures that the actual uncertain system will not violate the safety constraints. It is also shown both theoretically and numerically that the estimation error bound and the conservatism of the robust constraints can be systematically reduced by reducing the estimation sampling time. The proposed approach is validated on an adaptive cruise control problem through comparisons with existing approaches.  




\appendix

{\it Proof of Lemma~\ref{lemma:estiamte-error-bound}:}  
From \eqref{eq:dynamics} and \eqref{eq:state-predictor}, the prediction error dynamics are obtained as
\begin{equation}
    \dot{\tilx}=-a\tilx + \hd(t)-d(t,x), \quad \tilx(0) = 0. 
\end{equation}
Therefore, $\hd(t) = 0 $ for any $t\in[0,T)$ according to \eqref{eq:adaptive_law}. Further considering the bound on $d$ in \eqref{eq:d-xdot-bound}, we have
\begin{equation}\label{eq:estimation-error-bound-0-T}
    \norm{\hd(t)-d(t,x)} \leq \theta, \quad \forall t\in[0,T).
\end{equation}
We next derive the bound on $\norm{\hd(t)-d(t,x)}$ for $t\geq T$. For notation brevity, we often write $d(t,x(t))$ as $d(t)$ hereafter. For any $t\in [iT, (i+1)T)$ ($i\in \mbZ_0$), we have
{\setlength{\mathindent}{0mm}
$$
    \tilx(t) = e^{-a(t-iT)}\tilx(iT) + \int_{iT}^t e^{-a(t-\tau)}(\hd(
    \tau)-d(\tau))d\tau.
$$}
Since $\tilx(t)$ is continuous, the preceding equation implies 
{\setlength{\mathindent}{0mm} \small
\begin{align}
  &  \tilx((i+1)T) \nonumber = ~  e^{-aT}\tilx(iT) +\int_{iT}^{(i+1)T} e^{-a((i+1)T-\tau)}d\tau \hd(iT) \nonumber \\
    & \hspace{2cm}  - \int_{iT}^{(i+1)T} e^{-a((i+1)T-\tau)}d(\tau)d\tau. \nonumber \\
     = &~e^{-aT}\tilx(iT) + \frac{1-e^{-aT}}{a}\hd(iT)   - \int_{iT}^{(i+1)T} e^{-a((i+1)T-\tau)}d(\tau)d\tau  \nonumber\\
    =& ~ - \int_{iT}^{(i+1)T} e^{-a((i+1)T-\tau)}d(\tau)d\tau,  \label{eq:tilx-iplus1-Ts}
\end{align}
 where the first and last equalities are due to the estimation law \eqref{eq:adaptive_law}.} 

Since $x(t)$ is continuous,
$d(t,x)$ is also continuous given Assumption~\ref{assump:lipschitz-bound-fg}.  Furthermore, considering that  $e^{-a((i+1)T-\tau)}$ is always positive, we can apply the first mean value theorem in an element-wise manner\footnote{{Note that the mean value theorem for definite integrals 
 only holds for scalar valued functions.}} to \eqref{eq:tilx-iplus1-Ts}, which leads to 
{\setlength{\mathindent}{0mm}
\begin{align}
    \tilx((i+1)T) = & -\int_{iT}^{(i+1)T} e^{-a((i+1)T-\tau)}d\tau \bbracket{d_j(\tau_j^*)} \nonumber\\
    =& -\frac{1}{a}(1-e^{-aT})\bbracket{d_j(\tau_j^*)}, \label{eq:xtilde-iplus1-Ts-d-tau}
\end{align}}for some $\tau_j^*\in (iT, (i+1)T)$ with $j\in\mbZ_1^n$ and $i\in \mbZ_0$, where $d_j(t)$ is the $j$-th element of $d(t)$, and
{$$\bbracket{d_j(\tau_j^*)}\triangleq 
[d_1(\tau_1^*),\cdots, d_n(\tau_n^*)
]^\top.$$}The adaptive law \eqref{eq:adaptive_law} indicates that for any $t$ in $[(i+1)T, (i+2)T)$, we have
$\hd(t) = -\frac{a}{e^{aT-1}}\tilx((i+1)T).$
The preceding equality and \eqref{eq:xtilde-iplus1-Ts-d-tau} imply that for any $t$ in $[(i+1)T, (i+2)T)$ with $i\in \mbZ_0$, there exist $\tau_j^*\in(iT,(i+1)T)$ ($j\in\mbZ_1^n$) such that 
\begin{equation}\label{eq:hatd-d-tau-star-relation}
    \hd(t) = e^{-aT}\bbracket{d_j(\tau_j^*)}.
\end{equation}
Note that 
{ 
\begin{align}
 &    \norm{d(t))-\bbracket{d_j(\tau_j^*)}} \leq \sqrt{n}\infnorm{d(t)-\bbracket{d_j(\tau_j^*)}} \nonumber\\
    = & \sqrt{n}\abs{d_{\bar j_t}(t)-{d_{\bar j_t}(\tau_{\bar j_t}^*)}} 
    \leq \sqrt{n}\norm{d(t)-{d(\tau_{\bar j_t}^*)}},\label{eq:d-dStar-index-conversion}
\end{align}
}where $\bar j_t=\arg\max_{j\in\mbZ_1^n} \abs{d_j(t)-{d_j(\tau_j^*)}}$.  Similarly,
{\setlength{\mathindent}{0mm} \small 
\begin{align}
 \norm{\bbracket{d_j(\tau_j^*)}} \leq& \sqrt{n}\infnorm{\bbracket{d_j(\tau_j^*)}} 
     =  \sqrt{n}\abs{{d_{{\bar j}}(\tau_{{\bar j}}^*)}} \leq \sqrt{n}\norm{{d(\tau_{{\bar j}}^*)}},  \label{eq:dStar-index-conversion}
\end{align}}where ${\bar j}=\arg\max_{j\in\mbZ_1^n} \abs{{d_j(\tau_j^*)}}$. 
Therefore, for any $t\in[(i+1)T, (i+2)T)$ ($i\in \mbZ_0$), we have
{\begin{align}
  &  \norm{d(t)-\hd(t)}     = \norm{d(t) - e^{-aT}[d_j(\tau_j^*)]} \nonumber\\
\leq & \norm{d(t) -[d_j(\tau_j^*)]}  + (1-e^{-aT})\norm{[d_j(\tau_j^*)]} \nonumber \\ 
\leq & \sqrt{n}\norm{d(t)-{d(\tau_{\bar j_t}^*)}} +(1-e^{-aT})\sqrt{n}\norm{{d(\tau_{{\bar j}}^*)}},
\label{eq:d-sigmahat-bound}
\end{align}}for some $\tau_{\bar j_t}^*\in (iT,(i+1)T)$ and  
$\tau_{{\bar j}}^*\in (iT,(i+1)T))$, where the equality is due to \eqref{eq:hatd-d-tau-star-relation}, and the last inequality is due to \eqref{eq:d-dStar-index-conversion} and \eqref{eq:dStar-index-conversion}. The inequality \eqref{eq:d-xdot-bound} implies that 
{\setlength{\mathindent}{0cm}\small
\begin{align*}
    \norm{x(t)-x(\tau_{\bar j_t}^*)} & \leq \int_{\tau_{\bar j_t}^*}^t \norm{\dot{x}(
   \tau)}d\tau 
   \leq \int_{\tau_{\bar j_t}^*}^t\phi d\tau = \phi(t-\tau_{\bar j_t}^*).
\end{align*}}The preceding inequality and \eqref{eq:tilf-lipschitz-cond} indicate that 
{\setlength{\mathindent}{0cm}
\small
\begin{align}
 &   \norm{d(t,x(t))   - d(\tau_{\bar j_t}^*,x(\tau_{\bar j_t}^*))} 
   \leq l_t(t-\tau_{\bar j_t}^*) + l_d\norm{x(t)-x(\tau_{\bar j_t}^*)} \nonumber \\
     \leq &  (l_t+l_d\phi)(t-\tau_{\bar j_t}^*)
   =  \eta (t-\tau_{\bar j_t}^*) \leq 2\eta T, \label{eq:d-t-taustar-bound}
\end{align}}where $\eta$ is defined in \eqref{eq:beta-defn}, and the last inequality is due to the fact that $t\in[(i+1)T, (i+2)T)$ and  $\tau_{\bar j_t}^*\in (iT, (i+1)T)$. 

Finally, plugging \eqref{eq:d-t-taustar-bound} into \eqref{eq:d-sigmahat-bound} leads to 
{\setlength{\mathindent}{0mm}\small
\begin{align}
   & \norm{d(t,x(t))-\hd(t)} \nonumber 
    \leq  2\sqrt{n}\eta T +\sqrt{n} (1- e^{-aT})\norm{d(\tau_{{\bar j}}^*,x(\tau_{{\bar j}}^*))} \nonumber \\
   & \leq  2\sqrt{n}\eta T +\sqrt{n} (1- e^{-aT})\theta = \gamma(T), \quad \forall t\geq T, \label{eq:estimation-error-bound-T-inf}
\end{align}}where the second inequality is due to \eqref{eq:d-xdot-bound}. From \eqref{eq:estimation-error-bound-0-T}
and \eqref{eq:estimation-error-bound-T-inf}, we arrive at \eqref{eq:estimation_error_bound}.
Considering that $X$ and $U$ are compact, the constants $\theta$ (defined in \eqref{eq:theta-defn}), $\phi$ (defined in \eqref{eq:phi-defn})  and $\eta$ (defined in \eqref{eq:beta-defn}) are all finite, the definition of $\gamma(T)$ in \eqref{eq:gammaTs-defn} immediately implies    $\lim_{T\rightarrow 0} \gamma(T) = 0$. \qed

\bibliographystyle{ieeetr}
\bibliography{refs-acrl,refs-pan,refs}

\end{document}